\newcounter{thrm}
    \newtheorem{trm}[thrm]{\bf Theorem}
	\newtheorem{prop}[thrm]{\bf Proposition}
	\newtheorem{rmk}[thrm]{\bf Remark }
	\newtheorem{cor}[thrm]{\bf Corollary}
	\newtheorem{lm}[thrm]{\bf Lemma}
	\newtheorem{defi}[thrm]{\bf Definition}
\title{The image in $\mathcal{M}_g$ of strata of meromorphic and quadratic differentials}
\author{Andrei Bud}
\date{}
\begin{document}

\maketitle
\begin{abstract}
	We compute the dimension of the image of the map $\pi_Z \colon Z \rightarrow \mathcal{M}_g$ forgetting the markings, where $Z$ is a connected component of the stratum $\mathcal{H}^k_g(\mu)$ of $k$-differentials with an assigned partition $\mu$, for the cases when $k=1$ with meromorphic partition and $k=2$ when the quadratic differentials have at worst simple poles.  
\end{abstract}


\section*{Introduction} \ \par

For positive integers $g$ and $k$, let $\mu =(m_1,\ldots,m_n)$ a partition of $(2g-2)k$ and define $\mathcal{H}_g^k(\mu) \subseteq \mathcal{M}_{g,n}$ to be the moduli space of $k$-canonical divisors of type $\mu$, parametrizing pointed curves $[C,p_1,\ldots,p_n]$ satisfying $\mathcal{O}_C(\sum_{i=1}^n m_i p_i) \cong \omega_C^k$. It is natural to consider a connected component $Z$ of $\mathcal{H}_g^k(\mu)$ and ask what is the dimension of the image of the forgetful map $\pi_Z\colon Z  \rightarrow \mathcal{M}_g$. We answer this question in the cases $k=1$ with meromorphic partition and $k=2$ when the quadratic differentials have at worst simple poles. Consequently, our results will provide new divisors along with higher
codimension cycles on the moduli space $\mathcal{M}_g$. To underline the importance of such cycles we point out that for $k = 1$ and $\mu = (2,2,\ldots,2)$ the image of the even component is the divisor of curves with a vanishing theta null, see \cite{teix}. \\

We start with the case $k=1$ and drop the superscript $k$ from the notation of the moduli of canonical divisors. It is obvious that if $\mu$ has a unique negative entry, equal to $-1$, the stratum $\mathcal{H}_g(\mu)$ is empty. We will assume in what follows that the partition $\mu$ is not of this form and prove the following result:

\begin{trm}\label{T1}
	For $g\geq 2$, let $\mu$ be a strictly meromorphic partition  of $2g-2$ of length $n$ and $Z$ a connected, non-hyperelliptic component of $\mathcal{H}_g(\mu)$. Then the dimension of the image of the forgetful map $\pi_Z\colon Z \rightarrow \mathcal{M}_g$ is the expected one, that is, $\min\left\{2g+n-3, 3g-3\right\}$. 
\end{trm}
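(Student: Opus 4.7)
For a strictly meromorphic stratum, $Z$ has dimension $2g+n-3$ (this follows, for instance, from the period coordinates on the moduli of meromorphic differentials $H_1(C\setminus\{\mathrm{poles}\},\{\mathrm{zeros}\})$, which has rank $2g+n-2$, modulo the scaling of $\omega$). Hence the inequality $\dim \pi_Z(Z) \le \min\{2g+n-3,\, 3g-3\}$ is automatic. The content of the theorem is the lower bound, which is equivalent to showing that the generic fiber of $\pi_Z$ has dimension exactly $\max\{0,\, n-g\}$.

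The plan is to exhibit a smooth point $(C,p_1,\ldots,p_n)\in Z$ at which the differential $d\pi_Z$ attains the expected rank $\min\{2g+n-3,\, 3g-3\}$, and then conclude by upper semicontinuity of fiber dimension. A standard deformation-theoretic computation identifies the tangent space to the fiber $\pi_Z^{-1}([C])$ with the kernel of the Abel--Jacobi differential
\[
\alpha \colon \bigoplus_{i=1}^n T_{p_i} C \longrightarrow H^1(\mathcal{O}_C), \qquad (\dot p_i) \longmapsto \sum_i m_i\, \dot p_i.
\]
The dual map $H^0(\omega_C) \to \bigoplus_i T^*_{p_i} C$, $\omega \mapsto (m_i\,\omega(p_i))_i$, has kernel $H^0(\omega_C(-p_1-\cdots-p_n))$, so $\operatorname{rank}(\alpha) = g - h^0(\omega_C(-p_1-\cdots-p_n))$ and the fiber tangent dimension equals $n - g + h^0(\omega_C(-p_1-\cdots-p_n))$. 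The theorem therefore reduces to exhibiting a point of $Z$ where
\[
h^0\bigl(\omega_C(-p_1-\cdots-p_n)\bigr) \;=\; \max\{0,\, g-n\},
\]
i.e., where the marked points impose the maximum possible number $\min\{n,g\}$ of independent conditions on $H^0(\omega_C)$.

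The main task is then to construct such a point inside the prescribed component $Z$. The natural strategy is to degenerate $(C,p_i)$ to a boundary point of $\overline{Z}$---a reducible nodal curve carrying a twisted differential---on which the computation of $h^0(\omega_C(-p_1-\cdots-p_n))$ reduces to an explicit calculation on the normalization, and then transport the Brill--Noether independence back to a smoothable neighbor via semicontinuity of $h^0$. The principal obstacle is the bookkeeping needed to guarantee that the constructed boundary point belongs to the specific component $Z$, rather than to another component of $\mathcal{H}_g(\mu)$; this requires tracking the invariants---spin parity and hyperelliptic structure---that classify connected components of meromorphic strata. The \emph{non-hyperelliptic} hypothesis on $Z$ is decisive here: in a hyperelliptic component, the markings are forced into pairs of conjugates under the hyperelliptic involution, imposing dependent conditions on $H^0(\omega_C)$ and making $h^0(\omega_C(-p_1-\cdots-p_n))$ strictly exceed $\max\{0,g-n\}$, so that the expected-dimension formula would fail.
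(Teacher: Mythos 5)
Your reduction is correct and coincides with the paper's: the differential of $\pi_Z$ along the fiber is the weighted Abel--Jacobi map, its corank is governed by $h^0(\omega_C(-p_1-\cdots-p_n))$, and by Riemann--Roch the theorem reduces to exhibiting a point of $Z$ with $h^0(C,p_1+\cdots+p_n)=\max\{1,n-g+1\}$, the upper bound on $\dim\pi_Z(Z)$ being automatic since $\dim Z = 2g+n-3$. This is exactly Proposition 1.2 and Corollary 1.3 of the paper, and your proposed strategy for producing such a point (degenerate to a nodal boundary curve, compute on the normalization, use semicontinuity) is also the paper's.

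The gap is that you stop precisely where the proof begins: you name the construction of the degenerate point in the prescribed component as ``the main task'' and the component bookkeeping as ``the principal obstacle,'' but you do not carry either out, and this is where all the content lies. Concretely, what is missing is: (1) the inductive step, namely the explicit clutchings such as $\mathcal{H}_1(m_i,-m_i)\times\mathcal{H}_{g-1}(m_1,\ldots,\hat m_i,\ldots,m_n,m_i-2)\to\overline{\mathcal{M}}_{g,n}$ together with the fact (from the description of the boundary of the strata) that their images lie in $\overline{\mathcal{H}}_g(\mu)$, and the splitting of the surjectivity condition over the two sides of the node; (2) the base case $g=2$, which already requires separate ad hoc arguments for partitions like $(-1,-1,2,2)$ and $(-1,-1,4)$ where the naive clutching degenerates to an empty stratum; (3) for partitions of even type, tracking the spin parity of the nodal curve via Cornalba's description so that the degeneration lands in the chosen component --- and here the statement you want is actually \emph{false} for certain pairs $(Z,A)$ (e.g.\ $Z=\mathcal{H}^{\mathrm{even}}_g(2,\ldots,2,-1,-1)$ with $A=\{1,\ldots,g\}$), so one must exploit the freedom to choose a different $g$-element subset $A$, a flexibility your plan does not build in; (4) the partitions of hyperelliptic type with $n=2$, where no good degeneration exists and one instead derives a contradiction from the divisor relation $2np_1-2mp_2=(g-1)g^1_2$ forcing $\pi_Z$ to be finite. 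Without these steps the argument is a correct framing of the problem rather than a proof.
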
 

 Together with the case of holomorphic differentials, treated by Gendron in \cite{Gen18}, and the obvious case of hyperelliptic components, Theorem \ref{T1} completely answers the question for strata with $k=1$. As a consequence, every stratum $\mathcal{H}_g(\mu)$ has a connected component $Z$ whose dimension of the image in $\mathcal{M}_g$ is equal to $\min\left\{\dim(Z),3g-3\right\}$. \\
 
  We prove a similar theorem for the case $k=2$ when the poles are at worst of order $1$. When we have $\mu = (2m_1,\ldots,2m_n)$ a positive partition of $4g-4$, the stratum $\mathcal{H}_g^2(\mu)$ contains the components of $\mathcal{H}_g(\frac{\mu}{2})$. We will denote by $\mathcal{Q}_g(\mu)$ the union of connected components of $\mathcal{H}_g^2(\mu)$ that are not components of $\mathcal{H}_g(\frac{\mu}{2})$. When $\mu$ has at least one odd entry we make the convention $\mathcal{Q}_g(\mu) = \mathcal{H}_g^2(\mu)$. 
 
 If $\mu$ is a partition of $4g-4$ as above, there are four cases when $\mathcal{Q}_g(\mu)$ is empty; namely $g=1$ and $\mu = (1,-1)$, or $\mu = (0)$ and the cases when $g=2$ and $\mu = (3,1)$ or $\mu = (4)$. We assume in the next theorem that $\mu$ is not one of these partitions. \\
 
 In this case, almost any non-hyperelliptic component of a stratum has dimension of the image in $\mathcal{M}_g$ equal to $\min\left\{2g-3+n,3g-3\right\}$ with a unique exception in genus 4. It was proven by Chen and M\"oller in \cite{ChenM} that if $\mu = (3,3,3,3)$, the stratum $\mathcal{Q}_4(3,3,3,3)$ has two non-hyperelliptic connected components distinguished by whether the value $h^0(C,p_1+p_2+p_3+p_4)$ of a pointed curve $[C,p_1,p_2,p_3,p_4]$ in the stratum is 1 or 2. As this value plays a fundamental role in our proof, we get that the component $\mathcal{Q}_4^{\mathrm{Irr} }(3,3,3,3)$  with associated value 2, has dimension of the image one less than expected. We prove the following:
 
 \begin{trm}\label{T2} Let $g \geq 2 $ and $\mu$ be a partition of $4g-4$ with entries either positive or $-1$. Then if $Z$ is a connected component of $\mathcal{Q}_g(\mu)$ that is not hyperelliptic, the dimension of the image of the forgetful map $ \pi_Z \colon Z \rightarrow \mathcal{M}_g$ is $\min\left\{2g-3+n,3g-3 \right\}$, with the unique exception $Z = \mathcal{Q}_4^{ \mathrm{Irr} }(3,3,3,3)$ in genus $4$, when the dimension of the image is $2g-4+n = 8$.
 \end{trm}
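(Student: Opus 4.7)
Plan: The stratum $Z \subseteq \mathcal{H}^2_g(\mu) \subseteq \mathcal{M}_{g,n}$ has dimension $2g+n-3$, being cut out by the $g$ conditions $\mathcal{O}_C(\sum m_i p_i) \cong \omega_C^2$ from $\mathcal{M}_{g,n}$, of dimension $3g-3+n$. The upper bound $\dim\pi_Z(Z) \leq \min\{2g+n-3,\ 3g-3\}$ is therefore automatic, and following the approach of Theorem \ref{T1}, my task is to produce a smooth point $(C,p_1,\ldots,p_n) \in Z$ at which $d\pi_Z$ attains the expected maximal rank (dropping by one in the exceptional case).

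I first analyze the generic fiber of $\pi_Z$. Over $[C]\in\pi_Z(Z)\subseteq\mathcal{M}_g$, the fiber consists of $n$-tuples $(q_i)\in C^n$ satisfying $\sum m_i q_i \sim 2K_C$; equivalently, it is the preimage of $[\omega_C^2]$ under the Abel--Jacobi map $C^n\to\mathrm{Pic}^{4g-4}(C)$, $(q_i)\mapsto[\sum m_i q_i]$. Its expected dimension is $\max(n-g,0)$, and excess dimension appears precisely when the markings lie on an extra linear system allowing them to be perturbed while preserving the stratum condition. For $\mathcal{Q}_4^{\mathrm{Irr}}(3,3,3,3)$, the Chen--M\"oller invariant $h^0(C, p_1+p_2+p_3+p_4)=2$ places $p_1+\cdots+p_4$ in a $g^1_4$; moving within this pencil keeps the divisor class $[3(p_1+\cdots+p_4)]=[2K_C]$ unchanged, contributing a $1$-dimensional fiber and giving image dimension $8=2g-4+n$, as stated.

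For each non-exceptional component $Z$ I construct a point $(C,p_i)\in Z$ at which $d\pi_Z$ has maximal rank. The key tool is the tangent exact sequence
$$0 \to T_{(C,p_i)}Z \to H^1\bigl(T_C(-\textstyle\sum p_i)\bigr) \xrightarrow{\delta} H^1(\mathcal{O}_C),$$
in which $\delta$ is the linearization of the condition $\mathcal{O}_C(\sum m_i p_i)\cong\omega_C^2$; composing with the surjection $H^1(T_C(-\sum p_i))\twoheadrightarrow H^1(T_C)=T_{[C]}\mathcal{M}_g$ recovers $d\pi_Z$. Via Serre duality, the corank of $d\pi_Z$ translates to the dimension of the kernel of a multiplication-type map involving global sections of $\omega_C$ and $\omega_C^2$. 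I verify on a carefully chosen curve (for instance a Brill--Noether general $C$, or a suitable plane model) that this kernel has dimension $\max(n-g,0)$, and that the constructed pointed curve lies in the prescribed component $Z$, neither in a hyperelliptic one (excluded by hypothesis) nor in a component of $\mathcal{H}_g(\mu/2)$.

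The main obstacle is ensuring that no component other than $\mathcal{Q}_4^{\mathrm{Irr}}(3,3,3,3)$ carries a universal extra linear system forcing the generic fiber to be too large. This requires a case-by-case analysis of the connected components of $\mathcal{Q}_g(\mu)$ using their defining invariants, together with an explicit verification of the injectivity of the above multiplication map at the chosen smooth point. The genus-$4$ exception is intrinsic, since $h^0(\mathcal{O}_C(p_1+\cdots+p_4))=2$ is itself the Chen--M\"oller invariant distinguishing the component; the technical heart of the proof consists in ruling out analogous pencil phenomena for all other pairs $(g,\mu)$ and all other components.
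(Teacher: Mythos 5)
Your framework is the right one and matches the paper's: the corank of $d\pi_Z$ at $[C,p_1,\ldots,p_n]$ is controlled, via the tangent-space description of the stratum and Serre duality, by $h^0(C,\sum_i p_i)$ (this is Proposition \ref{fibdim} and Corollary \ref{cor} in the paper), and your explanation of why $\mathcal{Q}_4^{\mathrm{Irr}}(3,3,3,3)$ is exceptional --- the Chen--M\"oller invariant $h^0(C,p_1+\cdots+p_4)=2$ forces a pencil in every fiber --- is exactly the paper's Case 3. However, the proposal stops at the point where the actual work begins, and the two steps you defer are genuine gaps, not routine verifications. First, you must produce, in \emph{each} non-hyperelliptic connected component $Z$, a pointed curve with $h^0(C,\sum_{i\in A}p_i)=1$ for a suitable $A$ with $|A|=\min\{g,n\}$. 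Choosing ``a Brill--Noether general $C$'' does not do this: the points $p_i$ are not free but constrained by $\sum m_ip_i\sim 2K_C$, and, more seriously, the connected components of $\mathcal{Q}_g(\mu)$ are distinguished by hyperellipticity and by the very invariant $h^0(\sum p_i)$ you are trying to control, so you must certify that your point lands in the prescribed component. The paper does this by induction on $g$: it degenerates to a one-nodal curve $C_1\cup C_2$ via clutching maps such as $\mathcal{Q}_1(m_1,-m_1)\times\mathcal{Q}_{g-1}(m_2,\ldots,m_n,m_1-4)\to\overline{\mathcal{Q}}_g(\mu)$, uses Lemma \ref{lmdeg} to split the $h^0$ condition over the two components, and handles a base case $g=2$ (Proposition \ref{k=2g=2}) that already requires eliminating several partitions by hand ($\mu=(4,1,-1)$, $(4,-1,1)$, $(1,1,1,1)$, $(-1,-1,3,3)$, etc.) and identifies the exception $\mu=(2,1,1)$. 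None of this is present in your proposal.

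Second, the ``case-by-case analysis of the connected components'' you mention is where the theorem could actually fail, and it cannot be waved at: for the hyperelliptic-type partitions with $n=2$ and for the sporadic strata $(6,3,-1)$, $(3,3,3,-1)$, $(6,3,3)$, $(3,3,3,3)$ one must argue separately for \emph{both} components. The paper does this by contradiction: assuming the fiber is too large yields an explicit divisorial relation (e.g.\ for $(6,3,-1)$, the relations $p+q+r+s=K_C$ and $6p+3q-r=2K_C$ combine to $4r+3s-3p=K_C$, whose subcases are excluded by Weierstrass-point arguments or by reduction to Theorem \ref{T1}; for $(3,3,3,-1)$, Clifford's theorem forces hyperellipticity and then a contradiction; for $(6,3,3)$ the $h^0=2$ component is matched with the divisor of Remark \ref{nicediv}). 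Without carrying out these arguments you have not ruled out further exceptions beyond $\mathcal{Q}_4^{\mathrm{Irr}}(3,3,3,3)$, which is precisely the assertion of the theorem. In short: correct strategy, but the two load-bearing ingredients (the clutching/induction construction of points with minimal $h^0$ in a specified component, and the explicit elimination of the multi-component and sporadic cases) are named but not supplied.
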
 \par

  Another case for which the answer is known is when the length of the partition $\mu$ is at least $g \geq 3$ and $\mathcal{H}_g^k(\mu)$ has a unique irreducible component, see \cite{Barthesis} and \cite{BAR18}. This case will be used to simplify the proofs of the two theorems. In this paper we rely on the description of the forgetful map $\pi_{\mu}:\mathcal{H}_g^k(\mu)\rightarrow \mathcal{M}_g$ at the level of tangent spaces appearing in \cite{Daweik-diffcomp}, \cite{Mon} and on a degeneration argument that ensures the locus where the tangent map is injective is non-empty.

It is known that the components of the Deligne-Mumford strata $\mathcal{H}_g^k(\mu)$ are smooth of dimension $2g-2+n$ when $k=1$ and $\mu$ is holomorphic and $2g-3+n$ if the component is not one corresponding to $k$-th powers of holomorphic abelian differentials. We refer the reader to \cite{Daweik-diffcomp} and \cite{Sch18} for an account of these results using deformation theory. 

The Deligne-Mumford compactification of the strata $\mathcal{H}_g^k(\mu)$ will play an important role in this article, as we will often degenerate to a singular curve in the boundary of $\mathcal{M}_{g,n}$ inside $\overline{\mathcal{M}}_{g,n}$. Results in this direction appear in \cite{FP18}, where it is shown that the compactification is a component of the space of twisted canonical divisors. An exhaustive description of the curves in the boundary of the strata was achieved in \cite{DaweiAbComp} for $k=1$ and \cite{Daweik-diffcomp} for $k$ at least 2. 

The last important ingredient we need is the description of the connected components of $\mathcal{H}_g^k(\mu)$. Unfortunately, the answer is not known for $k\geq 3$. The case of holomorphic abelian differentials was treated by Kontsevich and Zorich in \cite{abelcompo}, the meromorphic case was treated by Boissy in \cite{Boissy} and the case of $k=2$ when the partition $\mu$ has entries greater or equal to $-1$ was studied by Lanneau in \cite{Lanneau}. Lanneau's list of sporadic strata was found to be inexhaustive and was completed by Chen and M\"oller in \cite{ChenM}. We know that in the case $k=1$, additional components appear because of the hyperelliptic and spin structure while for $k=2$ with entries at least $-1$, such components appear due to hyperellipticity.  \newline
\textbf{Acknowledgement} I would like to thank my advisor Gavril Farkas for all the support he offered me along the way and for introducing me to this beautiful topic.  I am also grateful to Ignacio Barros and Dawei Chen for contributing with insightful ideas. I am also thankful to the referee for the several suggested improvements.

\section{General approach} \ \par
Let $\mu = (m_1,\ldots,m_n)$ a partition of $(2g-2)k$ of length $n$. We consider the Deligne-Mumford substack $\mathcal{H}_g^k(\mu) \subset \mathcal{M}_{g,n}$ given by 
\[ \mathcal{H}_g^k(\mu)= \left\{[C,p_1,\ldots,p_n] \in \mathcal{M}_{g,n} \mid \mathcal{O}_C(\sum_{i=1}^n m_ip_i) \cong \omega_C^k \ \right\} \] \par

We see from Theorem \ref{T1} and Theorem \ref{T2} that the majority of connected components have image of dimension $\min\left\{2g-3+n,3g-3\right\}$ in $\mathcal{M}_g$. As we will encounter such components a lot, we make the convention: 
\begin{defi}
	Let $Z \subseteq \mathcal{H}^k_g(\mu)$ a connected component of the stratum and $\pi_Z\colon Z \rightarrow \mathcal{M}_g$ the forgetful map. We say that $Z$ is of the expected image dimension if the image of the forgetful map $\pi_Z$ has dimension equal to $\min\left\{\dim(Z), 3g-3\right\}$.
\end{defi}

The next proposition is fundamental in the proofs of Theorems \ref{T1} and \ref{T2}, as it offers information about the behaviour of the map from $\mathcal{H}^k_g(\mu)$ forgetting any of the sections.  \par 
\begin{prop}\label{fibdim}
	Let $\mu = (m_1,\ldots,m_n)$ a partition of $(2g-2)k$ with non-zero entries, $A\subseteq \left\{1,2,\ldots,n \right\}$ of cardinality denoted by $n'$ and $B$ the complement of $A$. Consider the composition map $\pi_{\mu,B}$ given by 
	\[\mathcal{H}^k_g(\mu)\xhookrightarrow{i}\mathcal{M}_{g,n} \xrightarrow{\pi} \mathcal{M}_{g,n'}\]
	where $\pi$ is the map forgetting the marked points $p_i$ for $i\in B$. \par
For a point $[C,p_1,\ldots,p_n] \in \mathcal{H}^k_g(\mu)$, the map at the level of tangent spaces 
\[ d\pi_{\mu,B}\colon T_{[C,p_1,\ldots,p_n]}\mathcal{H}_g^k(\mu)\rightarrow T_{[C,\left\{p_i\right\}_{i\in A}]} \mathcal{M}_{g,n'} \] 
has $d$-dimensional kernel if and only if $h^0(C, \sum\limits_{i\in B}p_i) = d+1$ 
\end{prop}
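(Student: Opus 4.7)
The strategy is to intersect $T\mathcal{H}^k_g(\mu)$ with $\ker(d\pi)$ inside $T\mathcal{M}_{g,n}$ and reduce the result to a fibre-dimension statement for the Abel--Jacobi map. From the short exact sequence of sheaves on $C$
\[ 0\to T_C\Bigl(-\sum_{i=1}^n p_i\Bigr)\to T_C\Bigl(-\sum_{i\in A} p_i\Bigr)\to \bigoplus_{i\in B} T_{p_i}C\to 0 \]
(the $H^0$-terms vanish for $g\geq 2$) one identifies $\ker(d\pi)$ at $[C,p_1,\ldots,p_n]$ with $\bigoplus_{i\in B} T_{p_i}C$, corresponding to sliding the markings $p_i$ ($i\in B$) on a fixed curve $C$.

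Next, I view $\mathcal{H}^k_g(\mu)\subset \mathcal{M}_{g,n}$ as the preimage of the zero section under the Abel--Jacobi-type morphism $\phi$ into the relative degree-$0$ Picard over $\mathcal{M}_g$ given by $(C,p_1,\ldots,p_n)\mapsto \mathcal{O}_C(\sum m_i p_i)\otimes \omega_C^{-k}$. Then $T\mathcal{H}^k_g(\mu)\subseteq \ker(d\phi)$, and for deformations in $\ker(d\pi)$ the curve (and hence $\omega_C^k$) is fixed, so $d\phi=0$ becomes precisely the stratum condition to first order. Consequently
\[ \ker(d\pi_{\mu,B})\,=\, \ker(d\phi)\cap \bigoplus_{i\in B} T_{p_i}C. \]

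Finally, a local computation on $C$ (essentially the tangent space description of strata in \cite{Daweik-diffcomp}, \cite{Mon}) shows that the restriction of $d\phi$ to $\bigoplus_{i\in B} T_{p_i}C$ sends $(\epsilon_i)$ to the functional $\omega\mapsto \sum_{i\in B} m_i\,\omega(\epsilon_i)$ on $H^0(\omega_C)$, viewed as an element of $H^1(\mathcal{O}_C)$ via Serre duality. Because every $m_i$ is nonzero, the rescaling $(\epsilon_i)\mapsto(m_i\epsilon_i)$ identifies this kernel with that of the tangent map of the classical Abel--Jacobi morphism $C^B\to \mathrm{Pic}^{|B|}(C)$, $(q_i)\mapsto \sum q_i$, at $(p_i)_{i\in B}$. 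Since the markings are distinct, this tangent kernel coincides with the tangent to the fibre of Abel--Jacobi, i.e.\ to the linear system $\bigl|\sum_{i\in B} p_i\bigr|$ at $\sum_{i\in B} p_i$, of dimension $h^0(C,\sum_{i\in B} p_i)-1$. This yields the claimed equivalence, and the main obstacle is precisely this last identification --- it is where the hypothesis $m_i\neq 0$ is used to pass from the weighted to the unweighted tangent map of Abel--Jacobi.
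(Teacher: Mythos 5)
Your argument is correct in substance but takes a genuinely different route from the paper. The paper starts from the tangent-space description $T_{[C,p_1,\ldots,p_n]}\mathcal{H}^k_g(\mu)=\ker H^1(\beta)$, passes to the Serre-dual diagram, and determines the cokernel by a local analysis of the vanishing orders of $\beta^{\vee}(s)$ at each marked point, concluding that $s$ lands in $\mathrm{Im}(a)$ if and only if $s\in H^0\bigl(C,\omega_C^{1-k}(\sum_i m_ip_i-\sum_{i\in B}p_i)\bigr)$, and finishing with Riemann--Roch. You instead restrict to the vertical subspace $\bigoplus_{i\in B}T_{p_i}C=\ker(d\pi)$ first and recognize the restricted differential as a weighted Abel--Jacobi differential, reducing to the classical fact that its kernel at a reduced divisor $D$ has dimension $h^0(D)-1$; this avoids the pole-order bookkeeping and makes transparent where the hypothesis $m_i\neq 0$ enters. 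Two steps deserve more care than you give them. First, you assert only $T\mathcal{H}^k_g(\mu)\subseteq\ker(d\phi)$ but then use equality of the intersections with $\ker(d\pi)$; the reverse inclusion is precisely the content of the tangent-space description in \cite{Daweik-diffcomp} and \cite{Mon} (the stratum is cut out to first order by the vanishing of $d\phi$), so it should be invoked at that point --- and note that where the $k$-differential is a $k$-th power of an abelian differential $d\phi$ fails to be submersive, a case the paper likewise only sketches. Second, the identification of $d\phi\restriction_{\bigoplus_{i\in B}T_{p_i}C}$ with $(\epsilon_i)\mapsto\sum_{i\in B}m_i[\epsilon_i]\in H^1(\mathcal{O}_C)$ is the one genuine computation in your approach (it is the vertical restriction of $H^1(\beta)$, the weight $m_i$ coming from differentiating the local expression of the $k$-differential); it is standard but is exactly the substitute for the paper's analysis of $\beta^{\vee}$ and should be written out.
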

\begin{proof}
	Let $[C,p_1,\ldots,p_n] \in \mathcal{H}_g^k(\mu)$, together with a $k$-differential $\varphi$ satisfying \[\mathrm{div}(\varphi) = \sum_{i=1}^n m_ip_i \] \par
We will now use the characterization in \cite{Daweik-diffcomp} and \cite{Mon} of the tangent space of $\mathcal{H}_g^k(\mu)$ at a point. Take the map:
 \begin{align*}
\beta:T_C(-\sum_{i=1}^n p_i) &\rightarrow \omega_C^{ k}(-\sum_{i=1}^n m_i p_i) \\
w &\mapsto \frac{d(\varphi \cdot w^{k})}{w^{ k-1}}
\end{align*} \par
The tangent space $T_{[C,p_1,\ldots,p_n]}\mathcal{H}_g^k(\mu)$ is identified with $\text{ker}(H^1(\beta))$ and the tangent map $d\pi_{\mu,B}$ is the dashed map in the diagram 
\[
\begin{tikzcd}
0 \arrow[r, ""] & T_{[C,p_1,\ldots,p_n]} \mathcal{H}_g^k(\mu)  \arrow{r}{ } \arrow[swap, dashed]{dr}{d\pi_{\mu,B}} & H^1(C, T_C(-\sum\limits_{i=1}^n p_i )) \arrow{d}{} \arrow{r}{H^1(\beta)}  & H^1(C,\omega_C^k(-\sum\limits_{i=1}^n m_ip_i ) ) \\
&  &  H^1(C, T_C(-\sum\limits_{i\in A}p_i)) &
\end{tikzcd}
\] 
where the vertical map is the one induced by the injective morphism \[T_C(-\sum\limits_{i=1}^n p_i) \rightarrow T_C(-\sum\limits_{i\in A}p_i)\] \par 
By Serre duality, the dashed map above has $d$-dimensional kernel if and only if the dashed map in the next diagram has $d$-dimensional cokernel: 
 \[
\begin{tikzcd}
H^0(C,\omega_C^{1-k}(\sum\limits_{i=1}^n m_i p_i))  \arrow{r}{H^0(\beta^\vee)}  & H^0(C, \omega_C^2(\sum\limits_{i=1}^n p_i ))  \arrow{r}{b}  & \text{ \ coker}(H^0(\beta^{\vee})) \arrow[r, ""] & 0 \\
&  H^0(C, \omega_C^2(\sum\limits_{i\in A}p_i)) \arrow{u}{a} \arrow[swap, dashed]{ur}{d\pi_{\mu,B}^{\vee}}  & &
\end{tikzcd}
\] \par 
The map $\beta^{\vee}:\omega_C^{ 1-k}(\sum_{i=1}^n m_i p_i) \rightarrow \omega_C^{2} (\sum_{i=1}^n p_i ) $ is given by: 
\[ \beta^{\vee}(s) = -\frac{d(\varphi^{ k-1} s^{ k} )}{\varphi^{ k-2}  s^{ k-1}}   \] \par 
We consider the case when $\varphi$ is not the $k$-th power of a holomorphic abelian differential. The same method with slight and obvious modifications can be applied to the holomorphic abelian case. \par
Let $a$ and $b$ the maps labelled as such in the second diagram. As the map $a$ is injective, we use the exactness of the diagram to deduce that 
 $\mathrm{coker}(b \circ a )$ is $d$-dimensional if and only if 
 \[\dim (\mathrm{Im}(H^0(\beta^{\vee})) \cap \mathrm{Im}(a)) = g+n'+d-n \] \par 
 The problem translates into understanding when elements of the space $ H^0(C,\omega_C^{1-k}(\sum_{i=1}^n m_i p_i))$ are mapped to quadratic differentials that are holomorophic outside the points $p_i$ for $i\in A$. \\
 
 Let $s \in H^0(C,\omega_C^{1-k}(\sum_{i=1}^n m_i p_i)) $ and denote by $-n_i$ its order of vanishing at $p_i$, where the order is negative for poles. We also have from the definition that the $k$-differential $\varphi$ has order $-m_i$ at $p_i$. It is a well known fact that if a function $f$ has order $n \neq 0$ at a point $p$, then the differential $df$ has order $n-1$ at $p$. It follows from this discussion and from the definition of  $\beta^{\vee}$ that $\beta^{\vee}(s)$ has order at $p_i$ equal to $m_i -n_i -1$ if we have $kn_i-(k-1)m_i \neq 0$ or at least $m_i -n_i$ if  $kn_i-(k-1)m_i = 0$ \\

 As $s \in H^0(C,\omega_C^{1-k}(\sum_{i=1}^n m_i p_i)) $ it follows that $m_i \geq n_i$ for all $i = 1,\ldots,n$. The quadratic form $\beta^{\vee}(s)$ is holomorphic at the point $p_j$ for $j \in B$ if and only if we have $n_j \leq m_j - 1$. It follows that $s$ is sent by $H^0(\beta^{\vee})$ to $\mathrm{Im}(a)$ if and only if 
 \[ s\in H^0\text{\large(}C,\omega_C^{1-k}(\sum\limits_{i=1}^n m_i p_i - \sum\limits_{i \in B}p_i )\text{\large)} \] \par 
 As $H^0(\beta^{\vee})$ is injective it follows that:   
 \[\dim\text{\large(}\mathrm{Im}(H^0(\beta^{\vee})) \cap \mathrm{Im}(a) \text{\large)} = h^0\text{\large(}C,\omega_C^{1-k}(\sum\limits_{i=1}^n m_i p_i - \sum\limits_{i \in B}p_i )\text{\large)} \] \par 
 The conclusion follows, as the Riemann-Roch Theorem implies that the right-hand-side is $(g+n'+d-n)$-dimensional if and only if 
 \[h^0(C,\sum\limits_{i\in B}p_i) = d+1\] \par
 \end{proof} 

Using the well-known relation between the dimension of the generic fiber and the rank of the generic tangent map, we deduce as a straight forward application of Proposition \ref{fibdim} the following:
\begin{cor}\label{cor}
	Let $Z$ be a connected component of $\mathcal{H}_g^k(\mu)$ and $A \subseteq \left\{ 1,2,\ldots,n\right\}$. Then the dimension of the generic fiber of the map $\pi_{Z,A}\colon Z\rightarrow \mathcal{M}_{g,n'}$ forgetting the sections $\left\{p_i\right\}_{i\in A}$ is equal to $d-1$ where
	\[ d = \min\limits_{[C,p_1,\ldots,p_n] \in Z} \left\{h^0(C,\sum\limits_{i\in A} p_i) \right\}  \]   
\end{cor}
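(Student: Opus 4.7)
The plan is to deduce this directly from Proposition \ref{fibdim} via the standard ``generic rank equals image dimension'' principle, together with upper semi-continuity of $h^0$.

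First, since the components of $\mathcal{H}_g^k(\mu)$ are smooth (as recalled in the introduction), $Z$ is smooth and hence irreducible. In characteristic zero, for a morphism of irreducible varieties (or Deligne-Mumford stacks) the dimension of the generic fiber equals $\dim Z - \dim \pi_{Z,A}(Z)$, and at a general point the tangent map attains its maximum rank, equal to $\dim \pi_{Z,A}(Z)$. Consequently, the generic fiber dimension of $\pi_{Z,A}$ equals the dimension of $\ker d\pi_{Z,A}$ at a general point of $Z$.

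Next, I would apply Proposition \ref{fibdim} with the subset $B$ there taken to be the subset $A$ of the corollary, so that the forgotten indices match. This gives the pointwise identity
\[ \dim \ker d\pi_{Z,A}\big|_{[C,p_1,\ldots,p_n]} \,=\, h^0\bigl(C, \textstyle\sum_{i \in A} p_i\bigr) - 1 \]
on all of $Z$.

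Finally, by the cohomology semi-continuity theorem applied to the family of line bundles $\mathcal{O}_C(\sum_{i \in A} p_i)$ over $Z$, the function $[C,p_1,\ldots,p_n] \mapsto h^0(C, \sum_{i \in A} p_i)$ is upper semi-continuous on $Z$. Since $Z$ is irreducible, its minimum value $d$ is attained on a dense open subset of $Z$, hence at the generic point. Combining the three observations, the generic fiber dimension of $\pi_{Z,A}$ is $d-1$, as claimed. I do not expect any substantive obstacle here: all three ingredients are standard, and the only thing requiring care is the notational translation between the roles of $A$ and $B$ in Proposition \ref{fibdim} and in the corollary.
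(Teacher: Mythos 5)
Your proposal is correct and follows exactly the route the paper intends: the paper gives no written proof, merely invoking ``the well-known relation between the dimension of the generic fiber and the rank of the generic tangent map'' together with Proposition \ref{fibdim}, and your three steps (generic rank equals image dimension, the pointwise kernel formula from Proposition \ref{fibdim} with the forgotten set playing the role of $B$, and semicontinuity of $h^0$ to locate the locus where the minimum $d$ is attained) are precisely the straightforward application being alluded to.
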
 \ \par \ \par

As it is not clear how to find smooth $n$-pointed curves $[C,p_1,\ldots,p_n]$ satisfying $h^0(C,p_1+\ldots+p_n)= d$, we degenerate to a nodal curve with one node and two irreducible components. We explain below, using the Riemann-Roch theorem, how this condition degenerates in this case. \\

By taking the long exact sequence associated to the short exact sequence
\[ 0 \rightarrow \omega_C(-\sum\limits_{i\in A} p_i) \rightarrow \omega_C \rightarrow  \omega_C/ \omega_C(-\sum\limits_{i\in A} p_i) \rightarrow 0 \]
we get that $h^0(C,\sum\limits_{i\in A} p_i) = d$ is equivalent to the map
\[H^0(C,\omega_C) \rightarrow H^0(C,\omega_C/ \omega_C(-\sum\limits_{i\in A} p_i)) \]
being of rank $1+n'-d$, where $n'$ denotes the cardinality of $A$. \par 
This observation is essential as this equivalent form is a rank condition for a vector bundle morphism and can be extended to $\overline{\mathcal{M}}_{g,n}$. This paves the way for using a degeneracy argument to deduce the dimension of a generic fiber of $\pi$. \\

Consider a curve $C$ = $C_1 \cup C_2$ consisting of two irreducible components glued together at exactly one node. We know that 
\[ H^0(C,\omega_C) = H^0(C_1,\omega_{C_1})\oplus H^0(C_2,\omega_{C_2}) \]
In particular, by denoting $A_i = \left\{ j\in A \mid p_j \in C_i \right\}$ we see that 
\[ H^0\text{\large(}C,\omega_C/ \omega_C(-\sum\limits_{i\in A} p_i)\text{\large)} = H^0\text{\large(}C_1,\omega_{C_1}/ \omega_{C_1}(-\sum\limits_{i\in A_1} p_i)\text{\large)} \oplus H^0\text{\large(}C_2,\omega_{C_2}/ \omega_{C_2}(-\sum\limits_{i\in A_2} p_i)\text{\large)} \] \par
We will only be interested in the case $d=1$, where we have the following: 
\begin{lm}\label{lmdeg}
	With the above notations the map $H^0(C,\omega_C) \rightarrow H^0(C,\omega_C/ \omega_C(-\sum\limits_{i\in A} p_i))$ is surjective if and only if the two maps 
	\[ \normalfont H^0(C_1,\omega_{C_1}) \rightarrow H^0\text{\large(}C_1,\omega_{C_1}/ \omega_{C_1}(-\sum\limits_{i\in A_1} p_i)\text{\large)} \text{ \ and \ } H^0(C_2,\omega_{C_2}) \rightarrow H^0\text{\large(}C_2,\omega_{C_2}/ \omega_{C_2}(-\sum\limits_{i\in A_2} p_i)\text{\large)} \]
	are surjective.
\end{lm}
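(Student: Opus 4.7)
The plan is to verify that the map in question splits as a direct sum of the two component-wise maps; once this is done, the claim reduces to the trivial observation that a direct sum of linear maps is surjective if and only if each summand is.

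First, I would invoke the decomposition $H^0(C,\omega_C) = H^0(C_1,\omega_{C_1}) \oplus H^0(C_2,\omega_{C_2})$ already stated just before the lemma; this holds because a curve with two components meeting at one node is of compact type, so a section of $\omega_C$ is just a pair of holomorphic differentials on the two components (the residue condition at the node is automatic since both residues vanish). Next, I would check that the skyscraper sheaf $\omega_C/\omega_C(-\sum_{i\in A} p_i)$ is supported at the marked points $p_i$, all of which lie in the smooth locus of $C$, and in particular away from the node. Partitioning the points according to the component on which they lie gives the direct sum decomposition of the target that is already recorded in the excerpt.

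Then I would observe that the map $H^0(C,\omega_C) \to H^0(C,\omega_C/\omega_C(-\sum_{i\in A} p_i))$ respects these decompositions: a section $(\sigma_1,\sigma_2)$ of $\omega_C$ restricted to a formal neighborhood of any $p_j$ with $j \in A_i$ agrees with $\sigma_i$ alone, since $\sigma_{3-i}$ is a section of a sheaf supported on the other component and vanishes near $p_j$. Thus, under the two decompositions, our map is identified with the direct sum
\[
\bigoplus_{i=1}^{2}\Bigl(H^0(C_i,\omega_{C_i}) \to H^0\bigl(C_i,\omega_{C_i}/\omega_{C_i}(-\textstyle\sum_{j \in A_i} p_j)\bigr)\Bigr).
\]

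The conclusion is now immediate: a direct sum $f_1 \oplus f_2$ of linear maps is surjective precisely when each $f_i$ is. The "if" direction is clear. For "only if", compose with the projection onto the $i$-th summand of the target and note that sections of $\omega_C$ coming from $H^0(C_{3-i},\omega_{C_{3-i}})$ are sent to zero, so the surjection factors through $H^0(C_i,\omega_{C_i})$. There is no real obstacle here: the entire content of the lemma is the compact-type decomposition of $H^0(\omega_C)$ together with the fact that the marked points are disjoint from the node, so that the quotient sheaf decomposes cleanly as well.
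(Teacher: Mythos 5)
Your argument is correct and matches the paper's approach: the paper records exactly the two direct-sum decompositions (of $H^0(C,\omega_C)$ and of the quotient, using that the marked points avoid the node) immediately before the lemma and treats the conclusion as immediate, which is precisely the reduction you carry out. Your added justification of the compact-type splitting via vanishing residues and the component-wise surjectivity of a direct sum of linear maps fills in the routine details the paper leaves implicit.
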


\section{Meromorphic strata} \ \par
As explained previously, the condition $h^0(C,\sum_{i\in A} p_i)= 1$ can be interpreted as a maximal rank condition for a morphism of vector bundles on $\overline{\mathcal{M}}_{g,n}$, hence an open condition. To deduce that this condition is generically satisfied on a component $Z$ of $\mathcal{H}_g^k(\mu)$ it is enough to find a nodal curve $C$ on $\overline{Z}$ respecting the surjectivity in Lemma \ref{lmdeg}. The lemma implies we can use induction on the genus to deduce such conditions. We will start with the case $g=2$, which will serve as the initial step in our treatment. 
\begin{prop} \label{k=1g=2} Let $\mu$ a strictly meromorphic partition of 2 of length $n$ such that $\mathcal{H}_2(\mu)$ is non-empty. Then for every subset $A\subseteq\left\{1,2\ldots,n\right\}$ of cardinality $|A| \leq 2$, there exists an $n$-pointed curve $[C,p_1,\ldots,p_n] \in \mathcal{H}_2(\mu)$ satisfying  
\[h^0(C,\sum_{i\in A} p_i)= 1\]	
\end{prop}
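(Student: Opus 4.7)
The cases $|A| \in \{0,1\}$ are immediate from Riemann-Roch: on any smooth connected curve of positive genus one has $h^0(C, 0) = 1$ and $h^0(C, p) = 1$ for every point $p$. The interesting case is $|A| = 2$, which I will relabel $A = \{1, 2\}$. Every smooth genus $2$ curve $C$ is hyperelliptic with involution $\iota$, and by Riemann-Roch $h^0(C, p_1 + p_2) = 1$ except when $p_1 + p_2 \sim K_C$, i.e.\ when $p_2 = \iota(p_1)$. It thus suffices to exhibit $[C, p_1, \ldots, p_n] \in \mathcal{H}_2(\mu)$ with $p_2 \neq \iota(p_1)$.

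My plan is to degenerate to a boundary point of $\overline{\mathcal{H}_2(\mu)}$ in $\overline{\mathcal{M}}_{2, n}$ at which Lemma \ref{lmdeg} directly gives the condition $h^0 = 1$, and then invoke upper semi-continuity of $h^0$ to transport the property to a nearby smooth point of the stratum. Concretely, I would construct a nodal curve $C_0 = E_1 \cup_q E_2$ of arithmetic genus $2$, with $E_1, E_2$ of genus $1$ meeting at a single node $q$, sitting in $\overline{\mathcal{H}_2(\mu)}$ with the marked points distributed so that $p_1$ and $p_2$ lie on different components. Using the twisted canonical divisor description of the boundary from \cite{FP18}, I would partition the entries of $\mu$ between the two components and pick a balanced twist at $q$ realising this degeneration.

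On such a curve, Lemma \ref{lmdeg} reduces the condition $h^0(C_0, p_1 + p_2) = 1$ to surjectivity of the natural map $H^0(E_i, \omega_{E_i}) \to H^0(E_i, \omega_{E_i}/\omega_{E_i}(-p_i))$ for each $i = 1, 2$. Since $E_i$ is elliptic, $\omega_{E_i} \cong \mathcal{O}_{E_i}$, the target is the one-dimensional fibre $\omega_{E_i}|_{p_i}$, and the source is also one-dimensional; the evaluation map between them is an isomorphism, so surjectivity holds trivially on each side. Hence $h^0(C_0, p_1 + p_2) = 1$ at the boundary point, and upper semi-continuity of $h^0$ along the universal family then yields a nearby smooth $[C, p_1, \ldots, p_n] \in \mathcal{H}_2(\mu)$ with the same property.

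The main obstacle is the construction of the nodal degeneration $C_0$ itself: one must check that the given partition $\mu$ admits a distribution between the two elliptic components $E_1, E_2$ along with a coherent twisted canonical divisor at the node --- in particular that $p_1, p_2$ can be placed on different components while respecting the compatibility conditions at $q$. This requires modest case analysis on $\mu$, especially when $n$ is small, but the boundary description of \cite{FP18} guarantees such a degeneration exists under the standing hypotheses that $\mathcal{H}_2(\mu)$ is non-empty and $\mu$ avoids the excluded form.
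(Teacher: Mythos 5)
Your overall strategy --- degenerate to a compact-type curve $E_1 \cup_q E_2$ with the two points of $A$ on different elliptic components, apply Lemma \ref{lmdeg} (where surjectivity on each elliptic piece is automatic), and conclude by openness/semicontinuity --- is exactly the approach of the paper, and it works for most partitions. The reduction to $p_2 \neq \iota(p_1)$ and the computation on each elliptic component are both correct.

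The genuine gap is in your final paragraph, where you assert that the boundary description ``guarantees such a degeneration exists'' and relegate the verification to ``modest case analysis.'' This is precisely where the argument can fail, and it does fail: when both entries $m_1, m_2$ indexed by $A$ equal $-1$ and all remaining entries are positive (e.g.\ $\mu = (-1,-1,2,2)$ or $\mu = (-1,-1,4)$), \emph{no} compact-type degeneration into two elliptic components with $p_1 \in E_1$ and $p_2 \in E_2$ lies in $\overline{\mathcal{H}}_2(\mu)$. Indeed, whatever distribution of the positive entries you choose, the component carrying the deeper pole at the separating node $q$ also carries a simple pole at $p_1$ or $p_2$; the residue theorem on that elliptic component then forces a non-zero residue at $q$, while the global residue condition for smoothability at a separating node requires that residue to vanish. (Even the ``balanced'' distribution, with one twist equal to $-1$, produces a simple pole at $q$ whose residue cannot vanish.) So your claimed degeneration does not exist in the closure of the stratum, and semicontinuity gives you nothing. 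The paper handles exactly this exceptional configuration by a different argument: assuming $p_1+p_2$ is always a $g^1_2$, the map forgetting $p_1,p_2$ lands in a quadratic stratum $\mathcal{H}_2^2(m_3,\ldots,m_n)$ with one-dimensional fibers, a dimension count forces the image into a component of squares of abelian differentials, which narrows $\mu$ down to $(-1,-1,2,2)$ and $(-1,-1,4)$, and these are then excluded using the known description of their non-hyperelliptic components. Your proposal needs this (or some equivalent) supplementary argument to be complete.
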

\begin{proof}
When $|A| = 1$ the result is obvious. We will consider the case $|A| =2$ . \par
Take first the case when there exists $i \in A$ such that $m_i \geq 2$. Then, we can take the clutching along the last markings: 
\[\mathcal{H}_1(m_i,-m_i)\times \mathcal{H}_1(m_1,\ldots,\hat{m}_i,\ldots,m_n,m_i-2)\rightarrow \overline{\mathcal{M}}_{2,n}\]
The image of this morphism is in $\overline{\mathcal{H}}_2(\mu)$ by \cite{DaweiAbComp}, hence the conclusion follows by Lemma \ref{lmdeg}. \par
When both markings in $A$ correspond to elements $m_i \leq 1$, take $B \subseteq \left\{1,2,\ldots,n\right\}$ consisting of the elements $j$ in the complement of $A$ such that $m_j > 0 $. Without any loss of generality assume that $A = \left\{1,2\right\}$ and $m_1 \geq m_2$. We consider the clutching along the last markings 
\[ \mathcal{H}_1(m_1,\left\{m_j\right\}_{j\in B}, -m_1-\sum_{j \in B}m_j)\times \mathcal{H}_1(m_2,\left\{m_j\right\}_{j\notin B\cup \left\{1,2\right\}},m_1+\sum_{j \in B}m_j - 2)\rightarrow\overline{\mathcal{H}}_2(\mu) \subseteq \overline{\mathcal{M}}_{2,n} \] \par
The conclusion follows again by Lemma \ref{lmdeg}. The only case when we cannot apply the lemma is when one of the two strata is empty. This happens only if $m_1=m_2 = -1$ and all other entries are positive. Assume in this case that $p_1+p_2 = g^1_2$ for all curves $[C,p_1,\ldots,p_n] \in \mathcal{H}_2(\mu)$. We observe that the image of the map forgetting $p_1$ and $p_2$ is in $\mathcal{H}_2^2(m_3,\ldots,m_n)$ and the generic fiber is one-dimensional. \par By dimension reasons, it follows that the image must be contained in a component corresponding to abelian holomorphic differentials. This is possible if and only if $m_3,\ldots,m_n$ are all even. Hence we are left with the cases $\mu = (-1,-1,2,2)$ and $\mu = (-1,-1,4)$, and we see that in the nonhyperelliptic component, it is not true that $p_1+p_2$ is always a $g^1_2$.
\end{proof} 

We are now ready to prove the statement for all genera. 
\begin{prop}  \label{gen}
	Let $\mu$ a strictly meromorphic partition of $2g-2$ of length $n$ such that $\mathcal{H}_g(\mu)$ is non-empty. Then for every subset $A \subseteq \left\{1,2,\ldots,n \right\}$ of cardinality $|A| \leq g$ there exists an $n$-pointed curve $[C,p_1,\ldots,p_n] \in \mathcal{H}_g(\mu)$ such that 
	\[h^0(C,\sum_{i\in A} p_i)= 1\]    

\end{prop}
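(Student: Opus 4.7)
The approach is induction on $g$, with Proposition \ref{k=1g=2} as the base case $g=2$. The idea for the inductive step with $g \geq 3$ is to degenerate to a reducible nodal curve $C = C_1 \cup_q C_2$, consisting of a genus $g-1$ component $C_1$ and an elliptic tail $C_2$ glued at a single node $q$, and to use Lemma \ref{lmdeg} to split the rank condition on $C$ into independent conditions on $C_1$ and on $C_2$.

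Assuming $A$ is non-empty (the case $A = \emptyset$ being trivial), I pick any $j \in A$, place the marked point $p_j$ on the elliptic tail $C_2$, and leave the remaining $p_i$'s on $C_1$. Assigning the node orders $m_j - 2$ on the $C_1$-branch and $-m_j$ on the $C_2$-branch, the boundary description of the strata in \cite{DaweiAbComp} shows that the resulting clutching lies in $\overline{\mathcal{H}}_g(\mu)$. The elliptic-tail component $[C_2, p_j, q]$ sits in $\mathcal{H}_1(m_j, -m_j)$, which is non-empty because $m_j \neq 0$, and the other component $[C_1, \{p_i\}_{i \neq j}, q]$ sits in $\mathcal{H}_{g-1}(\mu')$ with $\mu' = (m_1, \ldots, \hat{m}_j, \ldots, m_n, m_j - 2)$, a partition of $2(g-1) - 2$.

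For Lemma \ref{lmdeg}, on the elliptic tail the subset $A_2 = \{j\}$ gives the map $H^0(C_2, \omega_{C_2}) \to H^0(C_2, \omega_{C_2}/\omega_{C_2}(-p_j))$, which is automatically surjective since both sides are one-dimensional and $\omega_{C_2}$ is trivial. On $C_1$ the subset $A_1 = A \setminus \{j\}$ has cardinality at most $g - 1$, so the induction hypothesis applied to $(g - 1, \mu', A_1)$ produces a pointed curve in $\mathcal{H}_{g-1}(\mu')$ satisfying the required rank-one condition. Combining these via Lemma \ref{lmdeg} gives $h^0(C, \sum_{i \in A} p_i) = 1$ on the nodal curve; since this is an open maximal-rank condition on a morphism of vector bundles over $\overline{\mathcal{M}}_{g,n}$, a general smoothing of $C$ inside $\mathcal{H}_g(\mu)$ will still satisfy it.

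The main technical obstacle is checking that $\mathcal{H}_{g-1}(\mu')$ satisfies the induction hypotheses: it must be strictly meromorphic and not identically empty, i.e., not of the forbidden type with a unique negative entry equal to $-1$. A short case analysis on the sign of $m_j$ handles this: if $m_j > 0$, then the negative entries of $\mu'$ coincide with those of $\mu$ (with an additional $-1$ when $m_j = 1$, which only helps), and the hypothesis that $\mathcal{H}_g(\mu)$ is non-empty rules out the bad configuration; if $m_j < 0$, then $\mu'$ contains the entry $m_j - 2 \leq -3$, so no unique entry equal to $-1$ can arise. This is the step most sensitive to the standing hypothesis on $\mu$, and it is where the non-emptiness of $\mathcal{H}_g(\mu)$ gets used essentially.
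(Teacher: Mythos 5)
Your overall strategy -- induction on $g$, degeneration to a curve with an elliptic tail, and Lemma \ref{lmdeg} to split the rank condition -- is exactly the paper's, but the execution has a genuine gap in the choice of the point sent to the tail. You claim that $\mathcal{H}_1(m_j,-m_j)$ is non-empty because $m_j\neq 0$; this is false for $m_j=\pm 1$, since on an elliptic curve $\mathcal{O}(p-q)\cong\mathcal{O}$ forces $p=q$ (this is the paper's own remark that a partition with a unique negative entry equal to $-1$ gives an empty stratum, applied to $(1,-1)$ in genus $1$). The case $m_j=2$ is also problematic, as $\mu'$ then acquires the entry $m_j-2=0$, which takes you outside the class of partitions with non-zero entries to which the inductive statement (and Proposition \ref{fibdim}) applies. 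So ``pick any $j\in A$'' does not work: you need $m_j\geq 3$ (or $m_j\leq -2$), and such a $j\in A$ need not exist. For instance, with $g=3$, $\mu=(1,1,5,-1,-2)$ and $A=\{1,2,4\}$, every $j\in A$ has $m_j=\pm 1$, and moving the points with $m_j=5$ or $m_j=-2$ to the tail instead leaves $|A_1|=3>g-1$, so the induction hypothesis on the genus-$(g-1)$ component is out of reach.

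The paper closes exactly this hole with an ingredient your proof is missing: the case $n\geq g$ is settled separately by Proposition 4.20 of \cite{Barthesis} together with Corollary \ref{cor}, which reduces the inductive step to $n\leq g-1$ and $A=\{1,\ldots,n\}$. In that range a counting argument (the positive entries sum to at least $2g-1$ and there are at most $g-2$ of them) guarantees some $m_i\geq 3$, and that index is automatically in $A$; the clutching with $\mathcal{H}_1(m_i,-m_i)$ then has both factors non-empty and $\mu'$ strictly meromorphic and non-empty. Without this reduction, the problematic configurations (all indices of $A$ carrying entries in $\{1,-1,2\}$) are precisely ones with $n\geq g$, so they cannot be dismissed. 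Your verification that $\mu'$ avoids the forbidden ``unique $-1$'' shape is fine as far as it goes; the missing pieces are the non-emptiness of the elliptic factor and the reduction that makes a good choice of $j$ available.
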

\begin{proof}
	We prove this result by induction on the genus. The proposition is clear for genus $g=1$ and the case $g=2$ is treated in Proposition \ref{k=1g=2}, hence we assume $g\geq 3$.
	Using Proposition 4.20 from \cite{Barthesis} and Corollary \ref{cor} we observe that the proposition is true when $n\geq g$. In particular, we can assume without loss of generality that $n\leq g-1$ and $A = \left\{1,2,\ldots,n\right\}$. As  previously explained, what we need to prove is equivalent to the surjectivity of the map 
	 \[H^0(C,\omega_C) \rightarrow H^0(C,\omega_C/ \omega_C(-\sum\limits_{i\in A} p_i))\] \par 
	 By Lemma \ref{lmdeg}, it is enough to find a pointed curve $[C = C_1\cup C_2, p_1,\ldots,p_n] \in \overline{\mathcal{H}}_g(\mu)$ having an unique node and satisfying the conditions: 
\[h^0(C_1,\mathcal{O}_{C_1}(\sum_{i\in A_1} p_i)) = 1 \text{ \ and \ } h^0(C_2,\mathcal{O}_{C_2}(\sum_{i\in A_2} p_i)) = 1 \]
where $A_i = \left\{ j\in A \mid p_j \in C_i \right\}$ \par 
In order to see that such a curve exists in the compactification, observe that there exists an $m_i \geq 3 $ and use the clutching:  
\[ \mathcal{H}_1(m_i, -m_i) \times \mathcal{H}_{g-1}(m_1,\ldots,\hat{m}_i,\ldots,m_n,m_i-2) \rightarrow \overline{\mathcal{M}}_{g,n} \] \par
Here we glue along the last markings of the two strata. By the description of the compactification in \cite{DaweiAbComp} we conclude that the image of the clutching is contained in the closure of $\mathcal{H}_g(\mu)$. The conclusion follows by the induction hypothesis. 

\end{proof}
We say that a partition $\mu$ of $2g-2$ is of even type if all its entries are even or if it is of the form $(2m_1,\ldots,2m_n,-1,-1)$ with $m_i$ positive for all $i$.
We study the case when $\mu$ is of even type and deduce a similar statement as the one before. This will require a little more care, as we would need to use a degeneration argument that keeps track of the parity of the spin structure. 
 \begin{prop} \label{odev} 
 	Let $\mu$ a partition of $2g-2$ of length $n$ and even type. Take $Z$ a connected component of $\mathcal{H}_g(\mu)$ and $A \subseteq \left\{1,2,\ldots,n \right\}$ a subset of cardinality $|A| \leq g $. Assume we are not in one of the following exceptional cases:  \\ 
 	i)  $Z$ is a hyperelliptic component \\ 
 	ii) $Z=\mathcal{H}^{\mathrm{even}}_g(2,2,\ldots,2,-1,-1)$ and $A = \left\{1,2,\ldots,g \right\}$ \\  
    iii) $Z=\mathcal{H}^{\mathrm{odd}}_g(2,2,\ldots,2,-2)$ and $A = \left\{1,2,\ldots,g \right\}$ \par
    Then there exists a pointed curve $[C,p_1,\ldots,p_n] \in Z$ such that: 
    \[h^0(C,\mathcal{O}_C(\sum_{i\in A} p_i)) = 1\] 
 \end{prop}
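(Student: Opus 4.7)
The plan is to argue by induction on the genus $g$, following the clutching strategy of Proposition \ref{gen} while carefully tracking the parity of the spin structure on each connected component of $\mathcal{H}_g(\mu)$. As in that proof, Lemma \ref{lmdeg} reduces the problem to exhibiting a single nodal curve $[C_1 \cup_q C_2, p_1,\ldots,p_n] \in \overline{Z}$ whose two components individually satisfy $h^0(C_j, \sum_{i \in A_j} p_i) = 1$, where $A_j = \{i \in A \mid p_i \in C_j\}$. The base case $g = 2$ would be handled by direct verification, combining the classification of spin components from \cite{Boissy} and \cite{abelcompo} with Proposition \ref{k=1g=2}; only a handful of non-exceptional $(Z, A)$ pairs occur in genus two, and each can be checked by hand.

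For $g \geq 3$, I would first look for an index $i$ with $m_i$ even and $m_i \geq 4$, and use the clutching
\[\mathcal{H}_1(m_i, -m_i) \times \mathcal{H}_{g-1}(m_1,\ldots,\widehat{m}_i,\ldots,m_n, m_i - 2) \longrightarrow \overline{\mathcal{H}}_g(\mu),\]
glued along the last markings, whose image lies in $\overline{\mathcal{H}}_g(\mu)$ by \cite{DaweiAbComp}. Because both orders at the node, namely $-m_i$ and $m_i - 2$, are even, the limit theta characteristic assembles from the components and its parity equals, up to a correction determined by $m_i/2$, the sum of the parities on the two factors. Since $\mathcal{H}_1(m_i, -m_i)$ carries components of both parities for $m_i \geq 4$, one can tune the genus-one factor so that the genus-$(g-1)$ factor lies in a component of the parity dictated by $Z$; the inductive hypothesis applied to that component and to $A_2 = A \cap (\{1,\ldots,n\}\setminus\{i\})$, of size at most $g-1$ (ensuring $|A_2| \leq g-1$ requires choosing $i \in A$ whenever $|A| = g$), then produces the required nodal curve.

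The main difficulty lies in the partitions for which no index $i \in A$ with $m_i$ even and $m_i \geq 4$ is available; these are essentially $\mu = (2,\ldots,2,-1,-1)$ and $\mu = (2,\ldots,2,-2)$, together with their variants where additional negative even entries appear. In such cases one must use alternative clutchings, for instance splitting off a genus-one component carrying one zero-order point together with the simple poles, or carrying one $-2$-pole, and then verifying case by case via \cite{DaweiAbComp} that the resulting clutching lies in $\overline{Z}$ and that both sides admit the required $h^0 = 1$ condition without falling into a lower-genus exceptional case. A careful bookkeeping of parities should show that these alternative clutchings succeed for every $(Z, A)$ outside of (i)--(iii); conversely, for exactly the pairs $(Z, A)$ in (ii) and (iii), every admissible clutching produces a $C_2$ that is itself an exceptional case for the inductive statement, which reflects the fact that in these components $p_1 + \ldots + p_g$ is forced to be special by the parity of the associated theta characteristic $\mathcal{O}_C(p_1 + \ldots + p_g - p_{g+1})$ (or its analogue), explaining why (ii) and (iii) must be excluded.
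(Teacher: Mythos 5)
Your overall strategy coincides with the paper's: induct on the genus, split off a genus-one tail $\mathcal{H}_1(m_i,-m_i)$ along a marking with $m_i\geq 4$ and $i\in A$, use the fact that this genus-one stratum carries components of both parities to tune the parity of the limit spin structure (via \cite{corn}), and apply the inductive hypothesis to the genus-$(g-1)$ factor. Your vaguely described ``alternative clutchings'' also have a concrete counterpart in the paper, namely grouping an even $m_i$ with $i\in A$ together with a set $B$ of indices \emph{outside} $A$ satisfying $m_j\geq 2$ and $m_i+\sum_{j\in B}m_j\geq 4$, and placing all of these on the elliptic tail; this handles the $(2,\ldots,2,\ast)$-type partitions you flag, and the genus-two base case is Proposition \ref{k=1g=2} as you suggest.

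There is, however, a genuine gap. The parity-tuning step only certifies membership in $\overline{Z}$ when $\mathcal{H}_g(\mu)$ has \emph{exactly two} connected components distinguished by parity. When $\mu$ is of hyperelliptic type and $g\geq 3$ there is an additional hyperelliptic component, and knowing the parity of the glued spin curve cannot distinguish the closure of the non-hyperelliptic component of that parity from the closure of the hyperelliptic one; since the desired conclusion is false on the hyperelliptic component, no bookkeeping of parities can close this. Your list of hard leftover cases (variants of $(2,\ldots,2,-1,-1)$ and $(2,\ldots,2,-2)$) does not contain these partitions, e.g. $(2a,-2b)$, so the case is simply absent from your sketch. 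The paper disposes of it by a separate, non-degenerative argument: for $n=3,4$ a dimension count showing the hyperelliptic component cannot have expected image dimension, and for $n=2$ the observation that if $h^0(p_1+p_2)=2$ held identically on a non-hyperelliptic component $Z$ of $\mathcal{H}_g(2a,-2b)$, then $p_1+p_2=g^1_2$ and $2(a+b)p_1=(g-1+2b)g^1_2$, forcing $\pi_Z$ to be finite and contradicting Corollary \ref{cor}. A smaller point: your parity heuristic for why (ii) and (iii) must be excluded is suggestive but not a proof; the paper simply routes around these two strata by using the other parity component (Proposition \ref{gen}) and, later, a different choice of $A$.
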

\begin{proof}
	We proceed as in the previous proof. We first observe that the statement is obviously true for the case $g=1$ and assume it is true by induction for the case of genus up to $g-1$. Without any loss of generality we can assume $|A| = \min\left\{g,n\right\}$. \par 
	In the cases ii) and iii), the proposition is true for the other connected component by Proposition \ref{gen}, so we can assume we are not in this setting for $\mu$ and $A$. \par
	
	As we excluded cases i-iii) for $Z$, we are in one of the following three cases: \\ 
	a) There exists $i \in A$ such that $m_i \geq 4$ \\
	b) $g=2$ with $\mu = (4,-1,-1)$ and $A =\left\{2,3\right\}$ or $\mu =(2,2,-1,-1)$ and $A = \left\{ 3,4\right\}$ \\
	c) There exist $i \in A$ and $B \subseteq \left\{1,2,\ldots,n\right\}$ in the complement of $A$ such that $m_i$ is even, $m_i+\sum_{j \in B} m_j \geq 4$ and $m_j \geq 2$ for all $j\in B$. \\
	
	Notice first that case b) is covered by Proposition \ref{k=1g=2}, as the strata have two connected components and the statement is false for the hyperelliptic one. 
	 For the other cases we take the following clutchings where the glueings are along the last markings. They correspond to the cases a) and c) respectively: 
	\[ \mathcal{H}_1(m_i,-m_i) \times \mathcal{H}_{g-1}(m_1,\ldots,\hat{m}_i,\ldots,m_n,m_i-2) \rightarrow \overline{\mathcal{H}}_g(\mu) \subseteq \overline{\mathcal{M}}_{g,n}\]
	\[\mathcal{H}_1(m_i, \left\{m_j\right\}_{j \in B}, -m_i-\sum_{j\in B}m_j) \times \mathcal{H}_{g-1}(\left\{m_j\right\}_{j \notin \left\{ i \right\} \cup B}, m_i+\sum_{j\in B}m_j - 2) \rightarrow \overline{\mathcal{H}}_g(\mu)\subseteq \overline{\mathcal{M}}_{g,n} \] \par
	The strata in genus 1 on the left of the clutchings have both even and odd components and the strata in genus $g-1$ on the right satisfy the induction hypothesis for at least one component by Proposition \ref{gen}. The conclusion follows as long as we can obtain singular curves as in Lemma \ref{lmdeg} that lay in the even and odd components respectively. Using the parity description for spin structures on curves with a unique node in \cite{corn}, this is clearly the case when $\mathcal{H}_g(\mu)$ has exactly two connected components that are distinguished by parity. This is the case when $\mu$ is not of hyperelliptic type or when $g=2$ and $\mu$ is hyperelliptic. \\
	
	As the genus 2 case is completely covered, we assume $g\geq 3$. The proposition is also true in the cases $n=3,4$ with $\mu$ of hyperelliptic type, as otherwise the hyperelliptic component would have expected image dimension. 
	We are left with the cases $\mu$ of hyperelliptic type and $n=1$ or $2$. The case of $n=1$ is obvious and we now treat the second case: 
	Assume that a component $Z$ of $\mathcal{H}_g(2n,-2m)$ that is not hyperelliptic satisfy $h^0(C,p_1+p_2)=2$ for all points $[C,p_1,p_2] \in Z$. It follows that $C$ is hyperelliptic and $p_1+p_2$ is a $g^1_2$. By hypothesis we have \[2np_1-2mp_2 = (g-1)g^1_2\]
	Adding $2mp_1+2mp_2 = 2mg^1_2$ to both sides we get
	\[2(n+m)p_1=(g-1+2m)g^1_2 \] \par
	Hence we have a finite number of choices for $p_1$ and $p_2$. From this it follows that the map 
	$\pi_Z$ is finite, contradicting the assumption because of Corollary \ref{cor}

\end{proof}  \ \par
It is useful to observe that the proposition is true for both components when $\mu = (2,2,\ldots,2,-1,-1)$ or $(2,2,\ldots,2,-2)$ and $A= \left\{1,2,\ldots,g-1,g+1\right\}$. \par 
We now explain how Proposition \ref{gen} and Proposition \ref{odev} are sufficient to deduce Theorem \ref{T1}. This will follow as a trivial application of the Riemann-Roch Theorem. \\

\textbf{Proof of Theorem \ref{T1}:} Take $Z$ a nonhyperelliptic component of $\mathcal{H}_g(\mu)$. Using Propositions \ref{gen} and \ref{odev} it follows that we can find $A \subseteq \left\{1,2,\ldots,n\right\}$ of cardinality $\min\left\{g,n\right\}$ and a smooth pointed curve $[C,p_1,\ldots,p_n]$ in $Z$ 
such that 
\[h^0(C, \sum_{i\in A} p_i) = 1\]
It then follows that:
\[h^0(C,\sum_{i=1}^n p_i) = \max\left\{1,n-g+1\right\}\] \par
Theorem \ref{T1} follows from Corollary \ref{cor}. \hfill $\qed$
\section{Quadratic strata } \ \par 
There is nothing fundamentally different in this case from the meromorphic one. However, this case has its particularities as there is no parity to be taken into account, there are some sporadic strata that have to be considered separately and there are some empty strata in low genus that need to be accounted when applying the induction. All this information can be found in \cite{ChenM} and \cite{Lanneau}. We proceed to prove the analogue in the quadratic case of Proposition \ref{gen}, starting with the base case $g=2$.    
\begin{prop} \label{k=2g=2}
	Let $\mu = (m_1,\ldots,m_n)$ be a partition of $4$ with all entries either -1 or positive, such that $\mathcal{Q}_2(\mu)$ is non-empty. Then for every subset $A \subseteq \left\{1,2,\ldots,n\right\}$ of cardinality $|A| \leq 2$ there exists an element $[C,p_1,\ldots,p_n] \in \mathcal{Q}_2(\mu)$ such that: 
	\[h^0(C, \sum_{i\in A}p_i) = 1\]
	with the exception of the case $\mu = (2,1,1)$ and $A = \left\{2,3 \right\}$ when the only component is the hyperelliptic one.
\end{prop}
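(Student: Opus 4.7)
The plan is to adapt Proposition \ref{k=1g=2} to quadratic differentials. Two structural changes are forced: the compatibility condition at a node of a twisted quadratic differential requires the two branch orders to sum to $-4$ (rather than $-2$), and in place of the spin decomposition one has to worry about the squared-abelian sublocus $\mathcal{H}_2(\mu/2) \subseteq \mathcal{H}_2^2(\mu)$ (when $\mu$ is all even) and the hyperelliptic components classified in \cite{ChenM} and \cite{Lanneau}. The case $|A| = 1$ is immediate since any effective degree-$1$ divisor on a smooth genus $2$ curve satisfies $h^0 = 1$ by Riemann--Roch.

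For $|A| = 2$, write $A = \{1, 2\}$. By Lemma \ref{lmdeg} it suffices to find a nodal curve $C = C_1 \cup_q C_2$ with $g_1 = g_2 = 1$ in $\overline{\mathcal{Q}}_2(\mu)$ with $p_1 \in C_1$ and $p_2 \in C_2$, because then each side carries a single marked point of $A$ on an elliptic curve and trivially has $h^0 = 1$. When some $m_i \geq 2$ for $i \in A$, say $i = 1$, I would clutch along the last markings
\[ \mathcal{H}_1^2(m_1, -m_1) \times \mathcal{H}_1^2(m_2, m_3, \ldots, m_n, m_1 - 4) \longrightarrow \overline{\mathcal{Q}}_2(\mu) \subseteq \overline{\mathcal{M}}_{2, n}, \]
where the node orders $-m_1$ and $m_1 - 4$ sum to $-4$ and each factor's orders sum to $0$. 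When $m_1, m_2 \leq 1$, so $m_1, m_2 \in \{-1, 1\}$, I would assume $m_1 \geq m_2$, set $B = \{j \notin A : m_j > 0\}$, and use the clutching
\[ \mathcal{H}_1^2\bigl(m_1, \{m_j\}_{j \in B}, -m_1 - \textstyle\sum_{j \in B} m_j\bigr) \times \mathcal{H}_1^2\bigl(m_2, \{m_j\}_{j \notin A \cup B}, m_1 + \textstyle\sum_{j \in B} m_j - 4\bigr), \]
again landing in $\overline{\mathcal{Q}}_2(\mu)$ with node orders summing to $-4$.

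The hard part will be verifying non-emptiness of each factor and controlling which component of $\overline{\mathcal{H}}_2^2(\mu)$ the clutching hits. Non-emptiness of $\mathcal{H}_1^2(m_1, -m_1)$ on an elliptic curve holds iff $m_1 \geq 2$ (forcing $p - q$ to be nontrivial $m_1$-torsion), matching the hypothesis of Case~A; in Case~B the equality $\sum m_j = 4$ combined with $m_1, m_2 \in \{-1, 1\}$ guarantees $B \neq \varnothing$, which in turn makes both factors non-empty. Avoiding the squared-abelian locus is automatic in Case~B since $\mu$ then has odd entries, and in Case~A with $\mu$ of squared type I would use that $\mathcal{H}_1(m_1/2, -m_1/2)$ is empty for $m_1 = 2$ and otherwise invoke the Chen--M\"oller classification to pick a non-squared boundary component. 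The exception $\mu = (2, 1, 1)$, $A = \{2, 3\}$ is genuine: $\mathcal{Q}_2(2, 1, 1)$ is entirely hyperelliptic, the involution must fix $p_1$ and exchange $p_2, p_3$ (since the divisor relation $2p_1 + p_2 + p_3 \sim 2K_C$ combined with $|K_C| = g^1_2$ rules out both $p_2, p_3$ being Weierstrass), so $p_2 + p_3 \sim g^1_2$ and $h^0 = 2$ identically on the stratum. In any sub-case where the Case~B clutching degenerates (for instance when the right-hand stratum turns out to be empty), I would mirror the final paragraph of Proposition \ref{k=1g=2} and derive a contradiction from Corollary \ref{cor} by showing that the forgetful map would have to be finite.
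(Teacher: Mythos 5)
Your overall strategy (clutch two elliptic tails so that the two marked points of $A$ land on different components, then apply Lemma \ref{lmdeg}) is the right skeleton, and your Case~A clutching with $m_1\geq 3$ is essentially what the paper does for $n\geq 3$. But there is a concrete gap in Case~B: the claim that $B\neq\varnothing$ ``makes both factors non-empty'' is false. The second factor of your Case~B clutching is $\mathcal{H}_1^2\bigl(m_2,\{m_j\}_{j\notin A\cup B},\,m_1+\sum_{j\in B}m_j-4\bigr)$, and since the last entry equals $-m_2+c$ where $c$ is the number of $-1$'s outside $A$, this factor degenerates to the \emph{empty} stratum $(m_2,-m_2)=(\pm 1,\mp 1)$ whenever $c=0$, i.e.\ whenever every entry of $\mu$ outside $A$ is positive. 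That is not a marginal sub-case: it covers $(1,1,1,1)$, $(1,-1,4)$, $(1,-1,2,2)$, $(-1,-1,6)$, $(-1,-1,3,3)$, $(-1,-1,4,2)$, $(-1,-1,2,2,2)$, as well as the exceptional $(2,1,1)$. Your Case~A clutching also fails for $\mu=(5,-1)$ (second factor $\mathcal{H}_1^2(-1,1)$, empty) and is delicate for $m_1=4$ (a zero order at the node, where e.g.\ $\mathcal{Q}_1(1,-1,0)$ is again empty); the paper treats all of $n=2$ by a separate algebraic argument ($(2m-4)p=(m-2)g^1_2$ forces finiteness of the forgetful map, contradicting Corollary \ref{cor}) precisely to avoid this.

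The catch-all fallback --- ``mirror the final paragraph of Proposition \ref{k=1g=2} and derive a contradiction from Corollary \ref{cor}'' --- cannot close these gaps uniformly, for two reasons. First, one of the residual cases, $(2,1,1)$ with $A=\{2,3\}$, is a genuine exception where no contradiction exists, so one must decide case by case whether the statement holds before arguing by contradiction. Second, the residual cases are exactly where the paper's proof does its real work: for $m_1=m_2=m$ it analyzes the forgetful map $\mathcal{Q}_2(\mu)\to\mathcal{H}_2^{2-m}(m_3,\ldots,m_n)$ and shows by dimension count that one-dimensional fibers force the image into a component of holomorphic abelian differentials, which isolates $(1,1,2)$, $(1,1,1,1)$, $(-1,-1,3,3)$, $(-1,-1,6)$; it then handles $(1,1,1,1)$ by an explicit construction (marked points in two conjugate pairs under the hyperelliptic involution) and the two $m=-1$ cases by invoking Lanneau's classification to produce a non-hyperelliptic component not mapping to $\mathcal{H}_2(1,1)$ or $\mathcal{H}_2(2)$; the unequal case $m_1\neq m_2$ needs a further dimension count using that a stratum with entries of both signs has no abelian-square component. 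None of this is recoverable from the clutching argument alone, so as written the proposal proves the proposition only for a proper subset of the partitions it must cover.
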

\begin{proof}
	The case $n=1$ is obvious and we start by considering the case $n=2$. Assume that a generic element $[C,p,q]$ in a stratum $\mathcal{Q}_2(m,4-m)$ satisfies $p+q = g^1_2$. It then follows from $mp + (4-m)q = 2g^1_2$ that $(2m-4)p =(m-2)g^1_2$. As a consequence over a curve $C$ there are finitely many choices for $p$ and $q$. Corollary \ref{cor} contradicts our assumption. 
	
	We can thus assume that
	$n \geq 3$. Take $\mu = (m_1,\ldots,m_n)$ a partition of 4 and assume that for all points $[C,p_1,p_2,\ldots,p_n] \in \mathcal{Q}_2(\mu)$ we have $p_1+p_2 = g^1_2$. \par	If one of the entries $m_1,m_2$ is at least 3, say $m_1$, then we can use the clutching along the last markings: 
	\[ \mathcal{Q}_1(m_1,-m_1) \times \mathcal{Q}_1(m_2,\ldots,m_n,m_1-4) \rightarrow \overline{\mathcal{M}}_{2,n} \] 
	The image of this map is in $\overline{\mathcal{Q}}_2(\mu)$ by \cite{Daweik-diffcomp} and hence by Lemma \ref{lmdeg} we derive a contradiction by induction as long as the stratum on the right is nonempty. As we assumed that $n \geq 3$, the strata is empty only for the cases $\mu = (4,1,-1)$ and $\mu = (4,-1,1)$. These cases can be treated by hand. \par
	If $\mu = (4,1,-1)$, by our assumption it follows that $3p_1-p_3 = g^1_2$ and hence $p_1$ is a Weierstrass point and $p_2 = p_1$, contradicting the fact that the markings are distinct. \par 
	If $\mu = (4,-1,1)$ it follows that $5p_1+p_3 = 3g^1_2$. Hence the image of the map forgetting the marking $p_2$ is contained in $\mathcal{H}_2^3(5,1)$. It follows that we have a finite map 
	\[ \mathcal{Q}_2(4,-1,1)\rightarrow \mathcal{H}_2^3(5,1)\] \par
	This contradicts the fact that the dimension of the space on the left is 4 while the space on the right has dimension 3. \\
	
	We are thus left with the case when both entries $m_1$ and $m_2$ are strictly less than 3. \par 
	Take the case $m_1= m_2 = m$. Then by forgetting the first two markings, we get a map 
	\[ \mathcal{Q}_2(\mu) \rightarrow \mathcal{H}_2^{2-m}(m_3,\ldots,m_n)  \] 
	with generically one dimensional fibers. By dimension reasons, the only case when this is possible is when the image corresponds to a component of holomorphic abelian differentials. This never happens when $m=2$ and can happen only in the cases $\mu = (1,1,2)$ and $(1,1,1,1)$ when $m=1$ and in the cases $\mu = (-1,-1,3,3)$ or $(-1,-1,6)$ when $m=-1$. \\
	
	When $m=1$, the first case is the exception already mentioned in the proposition and for the second case,  we take a curve $[C,p,q,p',q'] $ in $\mathcal{Q}_2(1,1,1,1)$ where $p,q$ are conjugate with $p'$ and $q'$ respectively. Hence our assumption was wrong and there exist a curve with $p_1+p_2 \neq g^1_2$, except for the known exception $\mu = (1,1,2)$. \\ 
	
	In the two cases when $m=-1$, the contradiction follows as we know from \cite{Lanneau} that these strata also have a nonhyperelliptic component, that does not map to the strata $\mathcal{H}_2(1,1)$ and $\mathcal{H}_2(2)$ respectively. \\ 
	
	The only case left is when $m_1$ and $m_2$ are distinct and less than $3$. Assume $m_1 > m_2$. As they cannot be both equal to 2, it follows that there must be another positive entry, say $m_3$. By our assumption $p_1+p_2 = g^1_2$ for all points $[C,p_1,\ldots,p_n]$ in the stratum. We see that by forgetting the point $p_1$ we get a finite map: 
	\[\mathcal{Q}_2(\mu) \rightarrow \mathcal{H}_2^{2-m_1}(m_2-m_1,m_3,\ldots,m_n)\] \par 
	There is no component corresponding to holomorphic abelian differentials as there are both positive and negative entries. Hence we obtain a contradiction by dimension reasons. \par
	Our proof by contradiction is now complete. 
	
\end{proof}

We can extend this result for all genera and we have the following statement.
\begin{prop} \label{quad}
	Let $\mu = (m_1,\ldots,m_n)$ be a partition of $4g-4$ with all entries either -1 or positive, such that $\mathcal{Q}_g(\mu)$ is non-empty. Then for every subset $A \subseteq \left\{1,2,\ldots,n\right\}$ of cardinality $|A| \leq g$ there exists an element $[C,p_1,\ldots,p_n] \in \mathcal{Q}_g(\mu)$ such that: 
	\[h^0(C, \sum_{i\in A}p_i) = 1\]
	with the exception of the case $\mu = (2,1,1)$ and $A = \left\{2,3 \right\}$ when the only component is the hyperelliptic one.
\end{prop}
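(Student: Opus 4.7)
The argument is a quadratic-differentials analogue of Proposition \ref{gen} and proceeds by induction on $g$, with Proposition \ref{k=2g=2} serving as the base case $g=2$. Assume $g \geq 3$ and the statement known for smaller genera. As in the meromorphic case, when $n \geq g$ the uniqueness of the irreducible component of $\mathcal{Q}_g(\mu)$ from \cite{Barthesis}, \cite{BAR18}, combined with Corollary \ref{cor}, forces $\min h^0(C, \sum_{i \in A} p_i) = 1$. So I may assume $n \leq g - 1$ and, relabelling, $A = \{1, \ldots, n\}$.

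In this range Lemma \ref{lmdeg} reduces the problem to exhibiting a boundary point $[C = C_1 \cup C_2, p_1, \ldots, p_n] \in \overline{\mathcal{Q}}_g(\mu)$ with exactly one node, together with a distribution $A = A_1 \sqcup A_2$ of $A$ between the two components, such that $h^0(C_j, \sum_{i \in A_j} p_i) = 1$ for $j = 1, 2$. The workhorse construction is the clutching along the last markings,
\[
\mathcal{Q}_1(m_i, -m_i) \times \mathcal{Q}_{g-1}\bigl(m_1, \ldots, \widehat{m_i}, \ldots, m_n, m_i - 4\bigr) \longrightarrow \overline{\mathcal{Q}}_g(\mu),
\]
applicable whenever some entry $m_i$ is large enough that $\mathcal{Q}_1(m_i, -m_i)$ is non-empty and the right-hand factor is a non-sporadic quadratic stratum. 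That the image lies in $\overline{\mathcal{Q}}_g(\mu)$ rather than in the locus of squared abelian differentials is immediate from the definition of $\mathcal{Q}_1$, and the compactification results of \cite{Daweik-diffcomp} ensure it actually lands in the Deligne--Mumford closure of $\mathcal{Q}_g(\mu)$. The genus one factor trivially satisfies the required rank-one condition on any single marking, and the induction hypothesis provides a curve in the genus $(g-1)$ factor realizing $h^0 = 1$; Lemma \ref{lmdeg} then produces the desired smooth curve in $\mathcal{Q}_g(\mu)$.

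The main obstacle is the finite but case-heavy list of partitions where the above clutching is unavailable: either no $m_i$ is sufficiently large, the relevant $\mathcal{Q}_{g-1}$ factor appears on the exceptional list of empty strata from \cite{ChenM} and \cite{Lanneau}, or the clutching image is forced into a hyperelliptic component. For these I would follow the meromorphic template: first try the grouped clutching
\[
\mathcal{Q}_1\bigl(m_i, \{m_j\}_{j \in B}, -m_i - \textstyle\sum_{j \in B} m_j\bigr) \times \mathcal{Q}_{g-1}\bigl(\{m_\ell\}_{\ell \notin B \cup \{i\}}, m_i + \textstyle\sum_{j \in B} m_j - 4\bigr),
\]
which gains flexibility by merging several small positive entries into a single large pole on the genus one side, and for the residual handful of sporadic partitions run the dimension-count contradiction from the end of Proposition \ref{k=2g=2}: if $h^0$ were forced to be at least $2$ throughout, then forgetting the markings indexed by $A$ would produce a map out of $\mathcal{Q}_g(\mu)$ with generically positive-dimensional fibers landing in a stratum of $k'$-differentials of too small a dimension, contradicting the dimension count unless the stratum is $\mathcal{Q}_2(2,1,1)$ with $A = \{2,3\}$, which is precisely the hyperelliptic exception stated.
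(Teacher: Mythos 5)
Your overall architecture matches the paper's proof exactly: induction on $g$ with Proposition \ref{k=2g=2} as the base, Barros's irreducibility result plus Corollary \ref{cor} for $n \geq g$, and for $n \leq g-1$ the clutching $\mathcal{Q}_1(m_i,-m_i)\times \mathcal{Q}_{g-1}(\ldots,m_i-4)$ combined with Lemma \ref{lmdeg}. (Note that for $n\leq g-1$ the pigeonhole principle always yields some $m_i\geq 4$, so "no entry large enough" is not among the obstructions; the only failures occur in genus $3$, when the genus-$2$ factor is empty, acquires a zero entry, or is the excluded stratum $\mathcal{Q}_2(1,1,2)$. The paper lists these explicitly: $(8)$, $(4,4)$, $(5,3)$, $(7,1)$, $(4,3,1)$, $(6,1,1)$.)

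The genuine gap is in your fallback for these residual cases. Your two proposed remedies --- the grouped clutching and the finiteness/dimension-count contradiction from the $n=2$ part of Proposition \ref{k=2g=2} --- do suffice for $(5,3)$, $(4,3,1)$ and $(7,1)$, but neither works for $\mu=(6,1,1)$. There the ungrouped clutching lands on $\mathcal{Q}_2(1,1,2)$, which is precisely the hyperelliptic exception of the induction hypothesis, and every grouped clutching produces an empty genus-$2$ stratum ($\mathcal{Q}_2(3,1)$ or $\mathcal{Q}_2(4)$). A bare dimension count also fails, because the condition $h^0(C,p+q+r)=2$ does not immediately force a map to a too-small stratum. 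The paper instead argues geometrically: from $h^0(C,2K_C-6p)=h^0(C,q+r)\geq 1$ one sees $p$ is a $2$-fold Weierstrass point, so the map to $\mathcal{M}_{3,1}$ forgetting $q,r$ is generically finite over the image in $\mathcal{M}_3$; the assumed $1$-dimensional fibers then force $h^0(C,q+r)=2$ via Corollary \ref{cor}, hence $C$ hyperelliptic with $6p=3g^1_2$, which makes $p$ a Weierstrass point and places the curve in the hyperelliptic component, contradicting disjointness of components. You would need to supply this (or an equivalent) argument to close the induction.
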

\begin{proof}
	We first observe that for any partition $\mu$ of 0 different from $(0)$ and $(1,-1)$, the stratum $\mathcal{Q}_1(\mu)$ is non-empty. Again, we consider this case to be the initial step and proceed by induction on the genus $g$. The genus 2 case is Proposition \ref{k=2g=2}. \\
	
	Suppose that the proposition is true for genus up to $g-1$ and let us prove it for $g \geq 3$. Again, by proposition 4.20 in \cite{Barthesis} we see that the proposition is true when $n \geq g$ and hence it is enough to treat the case $n\leq g-1$ and $A = \left\{1,2,\ldots,n\right\} $. \par 
	In this case it follows that there exists $m_i \geq 4$ and without loss of generality we assume that $i=1$. Then we take the clutching along the last markings
	\[ \mathcal{Q}_1(m_1,-m_1)\times \mathcal{Q}_{g-1}(m_2,\ldots,m_n,m_1-4) \rightarrow \overline{\mathcal{M}}_{g,n} \] 
	
	By \cite{Daweik-diffcomp}, the image is in the closure of $\mathcal{Q}_g(\mu)$. This map is well-defined as long as both the strata in the clutching are non-empty. 
	If the stratum in genus $g-1$ on the right is non-empty and not $\mathcal{Q}_2(1,1,2)$, we can apply the induction hypothesis as in Proposition \ref{gen} and the conclusion follows. \\
	
	 The cases when we cannot apply induction are the cases when the genus is 3 and the partition $\mu$ is $(8),(4,4), (5,3),(7,1),(4,3,1)$ or $(6,1,1)$. The cases $(8)$ and $(4,4)$ are obvious and the cases $(5,3)$ and $(4,3,1)$ follow by using another clutching, hence the only cases left to treat are $(7,1)$ and $(6,1,1)$. When $\mu =(7,1)$ the method in the proof of the case $n=2$ in Proposition \ref{k=2g=2} implies the conclusion.  

In the case $(6,1,1)$, assume that the forgetful map to $\mathcal{M}_3$ has generically 1-dimensional fibers. Take a point $[C,p,q,r] \in \mathcal{Q}_3^{\mathrm{nonhyp}}(6,1,1)$. As we have $h^0(C,2K_C-6p) =h^0(C,q+r) \geq 1$, it follows that $p$ is a 2-fold Weierstrass point. Hence the image of the map 
\[\mathcal{Q}_3^{\mathrm{nonhyp}}(6,1,1)\rightarrow \mathcal{M}_{3,1}\]
forgetting $q$ and $r$ is generically finite over the image in $\mathcal{M}_3$. By our assumption, the map forgetting the sections $q$ and $r$ has generically 1-dimensional fibers and hence by Corollary \ref{cor} we have $h^0(C,q+r)=2$. \par 
In this case $C$ is hyperelliptic and $6p = 3g^1_2$. 
Denote by $p'$ the conjugate of $p$. Then we have $3p=3p'$ and hence $h^0(3p) = 2$, which is possible if and only if $p$ is a Weierstrass point. Then $[C,p,q,r] \in \mathcal{Q}_3^{\mathrm{hyp}}(6,1,1)$, contradicting the fact that the components are disjoint. 

Consequently, the proposition is true for all partitions.
\end{proof} 

We proceed to prove Theorem \ref{T2}.

\textbf{Proof of Theorem \ref{T2}:} Analogously to the proof of Theorem \ref{T1}, it follows from Proposition \ref{quad} that every stratum has a connected component of the expected image dimension. In particular, the only cases left to consider are those when $\mathcal{H}_g(\mu)$ has more than one connected component. \par
 The cases when $n\geq 3$ and $\mu$ is hyperelliptic also follow as the hyperelliptic component is not of the expected image dimension. \\
 
 The only cases that are left to study are those when $\mu$ is of hyperelliptic type with $n=2$ or one of the sporadic cases $(9,-1), (6,3,-1),(3,3,3,-1),(12),(9,3),(6,6),(6,3,3)$ and $(3,3,3,3)$ when the stratum has two connected components. When $n=2$ it follows that both components are of the expected image dimension by applying the same method as for the case $n=2$ in Proposition \ref{k=2g=2}. The case with $n=1$ is also obvious. In particular, we need to prove Theorem \ref{T2} only in the cases $(6,3,-1), (3,3,3,-1),(6,3,3)$ and $(3,3,3,3)$. We will use the description of the strata appearing in \cite{ChenM} to draw the conclusion. \\
 
 We claim that Theorem \ref{T2} holds for these sporadic strata and hence the conclusion follows. \hfill $\qed$

\subsection{Sporadic quadratic strata} \ \par
It is fruitful to consider for these cases a proof by contradiction. This is due to Corollary \ref{cor}, as an assumption on the dimension of the fibers provides an extra divisorial information for the marked points. \\

\textbf{Case 1: $\mu = (6,3,-1)$}. Take $Z$ one of the two connected components of $\mathcal{Q}_3(6,3,-1)$ and assume that the map 
\[Z\rightarrow \mathcal{M}_3\]
is not generically finite. Using Corollary \ref{cor} we have $h^0(C,p+q+r) = 2$  for every point $[C,p,q,r] \in Z$. \par
Using the Riemann-Roch Theorem we deduce that $h^0(C,K_C-p-q-r) = 1$. In particular, there exists a point $s \in C$ such that: 
\[p+q+r+s = K_C\] \par 
Using this relation, together with $6p+3q-r = 2K_C$ we can deduce: 
\[4r+3s-3p = K_C\] \par 
There are three possible cases: \\ 
i) Generically $s=p$, case in which we have $4r=K_C$ and hence we have finite number of choices for $r$ and by our assumption, the composition map 
\[\mathcal{Q}_3(6,3,-1) \rightarrow\mathcal{H}_3(4) \rightarrow \mathcal{M}_3\] 
has one-dimensional fibers, and hence by Corollary \ref{cor} we have $p+q = g^1_2$. \par 
It follows that $3p-r = g^1_2$ and hence $p$ is a Weierstrass point. Then $p=q$, contradicting the fact that they are distinct. \\ 
ii) Generically $s=r$ and then the map from $\mathcal{Q}_3(6,3,-1) \rightarrow \mathcal{M}_{3,2}$ forgetting the marking $q$ has image $\mathcal{H}_3(7,-3)$ which is of the expected image dimension by Theorem \ref{T1}.  \\
iii) The image of the map $\mathcal{Q}_3(6,3,-1) \rightarrow \mathcal{M}_{3,2}$ forgetting the marking $q$ is the image of the forgetful map $\mathcal{H}_3(4,3,-3) \rightarrow \mathcal{M}_{3,2}$ forgetting the second marking. As $\mathcal{H}_3(4,3,-3)$ is generically finite over $\mathcal{M}_3$ from Theorem \ref{T1}, it follows that $Z$ is generically finite over $\mathcal{M}_3$, contradicting our assumption. \\

Since all possible cases yield the same result, we conclude that the connected components of $\mathcal{Q}_3(6,3,-1)$ have image of dimension $2g-3+n = 6$ in $\mathcal{M}_3$. \\

\textbf{Case 2:} $\mu = (3,3,3,-1)$. We take $Z$ a connected component of $\mathcal{Q}_3(3,3,3,-1)$ and assume that the map 
\[\pi_Z\colon Z \rightarrow \mathcal{M}_3 \]
does not have one dimensional fibers as expected. It follows that it must have two dimensional fibers and then, by Corollary \ref{cor} we have that for all points $[C,p,q,r,s] \in Z$ we have: 
\[h^0(C,p+q+r+s) = 3\] \par
By Clifford's Theorem, it follows that $C$ is hyperelliptic and $p+q+r+s = 2g^1_2$. Without loss of generality assume that $p,q$ and $r,s$ are pairs of conjugate points for the hyperelliptic involution. It then follows that $3r-s = g^1_2$ and hence $r$ is a Weierstrass point, hence $r=s$, contradicting the fact that they are distinct.  \\ 

\textbf{Case 3:} $\mu = (3,3,3,3)$. In this case, according to \cite{ChenM} the two non-hyperelliptic connected components are differentiated by whether the value of $h^0(C,p_1+p_2+p_3+p_4)$ is 1 or 2. This value is exactly what we need to deduce the dimension of the generic fiber. Hence one of the two components is dominant over $\mathcal{M}_4$ while the other, denoted $\mathcal{Q}_4^{\mathrm{Irr} }(3,3,3,3)$ has generically one dimensional fibers. \par 
\begin{rmk} \label{nicediv} \normalfont
	The image of the component $\mathcal{Q}_4^{\mathrm{Irr}}(3,3,3,3)$ provides an interesting divisor on the moduli space $\mathcal{M}_4$, namely the locus of curves $[C]$ that have a $g^1_4$ denoted by $A$ with the property that $3A=2K_C$. By observing that $h^0(K_C-A) = 1$ we deduce that the locus is also the image of the map $\mathcal{H}_4^{\mathrm{nonhyp}}(3,3) \rightarrow \mathcal{M}_4$. The class in $\mathrm{Pic}(\overline{\mathcal{M}}_4)\otimes \mathbb{Q}$ of its closure was computed in \cite{Mullane}.
\end{rmk}  \par
\textbf{Case 4:} $\mu = (6,3,3)$. Again, the two non-hyperelliptic connected components are differentiated by whether the value of $h^0(C,2p_1+p_2+p_3)$ is 1 or 2. \par 
For the component with $h^0(C,2p_1+p_2+p_3) = 1$ it follows that $h^0(C,p_1+p_2+p_3) = 1$ and hence the projection to $\mathcal{M}_4$ is generically finite. \par 
In the case when $h^0(C,2p_1+p_2+p_3) = 2$ we observe that for every curve in the divisor in Remark \ref{nicediv} there exist $p_1,p_2,p_3 \in C$ such that $2p_1+p_2+p_3 = A$. As both the component and the divisor are irreducible of the same dimension and the divisor is in the image of the projection to $\mathcal{M}_4$ it follows that it is the image of the forgetting map and hence the map is generically finite. \par

\bibliographystyle{alpha}

\bibliography{main}

HUMBOLDT UNIVERSIT\"AT ZU BERLIN, INSTITUT F\"UR MATHEMATIK, RUDOWER CHAUSEE 25, 12489 BERLIN, GERMANY \par 
E-mail address: andreibud95@protonmail.com

\end{document}